\documentclass[12pt]{article}

\usepackage{amsmath,amssymb,amsthm}
\usepackage[margin=2.5cm]{geometry}
\usepackage{hyperref}

\newtheorem{theorem}{Theorem}
\newtheorem{lemma}[theorem]{Lemma}

\theoremstyle{definition}
\newtheorem{definition}[theorem]{Definition}

\theoremstyle{remark}
\newtheorem{remark}[theorem]{Remark}

\newcommand{\R}{\mathbb{R}}
\newcommand{\N}{\mathbb{N}}
\newcommand{\Hk}{\mathbf{H}}
\newcommand{\Pk}{\mathbf{P}}
\newcommand{\ev}{\operatorname{ev}}
\newcommand{\im}{\operatorname{im}}
\newcommand{\rank}{\operatorname{rank}}
\newcommand{\diag}{\operatorname{diag}}
\newcommand{\nrm}[1]{\|#1\|}

\title{Completing the rank identity for Hadamard powers\\of Euclidean distance matrices}
\author{Boris Horvat
\footnote{Abelium d.o.o. and MobIQ d.o.o., Slovenia,
	\texttt{boris.horvat@abelium.com} {\em (Corresponding author)}}\,
\and Alen Orbani\'{c} 
\footnote{Abelium d.o.o., Slovenia,
	\texttt{alen.orbanic@abelium.com} }\,
\and Iztok Kavkler
\footnote{Abelium d.o.o., Slovenia,
	\texttt{iztok.kavkler@abelium.com} }
}
\date{30. 5. 2026}

\begin{document}
\maketitle

\begin{abstract}
Horvat et al.\ (J.\ Math.\ Chem., 2014) showed that the rank of the
$n$-th Hadamard power $D^{(n)}$ of a Euclidean distance matrix satisfies
$\rank D^{(n)} \le R_d^n$, and proved that the inequality is strict whenever
an annihilating polynomial exists.  The converse---that the absence of
annihilating polynomials forces $\rank D^{(n)} = R_d^n$---was left as an
open problem.  We resolve it by exhibiting a kernel factorisation
$D^{(n)} = \Phi_V\, M\, \Phi_V^T$, where $\Phi_V$ is the evaluation matrix on
the polynomial space $V$ and $M$ is a universal matrix independent of
the point configuration.  A trinomial expansion of the kernel reveals
that $M$ has a block-diagonal structure whose blocks are sums of Gram
matrices with positive coefficients; this yields the nonsingularity
of~$M$ and completes the rank identity.
\end{abstract}

\section{Introduction}\label{sec:intro}

The Hadamard (entry-wise) power of a Euclidean distance matrix (EDM) arises naturally
in mathematical chemistry when comparing molecular structures of different sizes via
eigenvalue-based descriptors \cite{HorvatJaklicKavklerRandic2014}.
Given points $\mathbf{x}_1,\dots,\mathbf{x}_m \in \R^d$, the EDM is
$D = [d_{ij}]$ with $d_{ij} = \nrm{\mathbf{x}_i - \mathbf{x}_j}^2$, and its
$n$-th Hadamard power is $D^{(n)} = [d_{ij}^n]$.  Euclidean distance matrices
go back to Schoenberg \cite{Schoenberg1935}; for their rank and nullspace
structure see Alfakih \cite{Alfakih2006}.

Horvat et al.\ \cite{HorvatJaklicKavklerRandic2014} proved:
\begin{itemize}
\item \textbf{Spherical case (Theorem~1):} if the points lie on a sphere, then
  $\rank D^{(n)} \le S_d^n := \binom{d+n-1}{d-1} + \binom{d+n-2}{d-1}$,
  with equality if and only if no polynomial of the appropriate degree
  annihilates all points.
\item \textbf{Generic case (Theorem~2):} for arbitrary points,
  $\rank D^{(n)} \le R_d^n := \binom{d+n}{d} + \binom{d+n-1}{d}$,
  with strict inequality when an annihilating polynomial exists.
\end{itemize}

In Theorem~1 the equality direction follows from Proposition~2 of
\cite{HorvatJaklicKavklerRandic2014}, which requires the summands of an
even/odd-degree decomposition of $D^{(n)}$ to be normal: on a sphere of
radius~$R$ the diagonal matrix $N=\diag(\nrm{\mathbf{x}_i}^2)$ equals
$R^2 I$ after centring, hence is scalar and preserves normality.  For general
configurations the norms vary, $N$ is no longer scalar, the summands are not
normal, and Proposition~2 is unavailable.  Consequently only the inclusion
$\im D^{(n)} \subseteq \ev_X(V)$ was established
in~\cite{HorvatJaklicKavklerRandic2014}, where $V$ is the polynomial space
\begin{equation}\label{eq:V}
  V := \sum_{k=0}^{n} \Hk^k_{\R^d}
     + \sum_{l=0}^{n-1} \nrm{\mathbf{x}}^{2(n-l)}\, \Hk^l_{\R^d}.
\end{equation}
The reverse inclusion is equivalent to the rank identity $\rank D^{(n)}=R_d^n$
when no annihilating polynomial exists, and was left open.

We resolve it by a route that avoids normality altogether: a \emph{kernel
factorisation} $D^{(n)} = \Phi_V\, M\, \Phi_V^T$, where $\Phi_V$ is the
$m \times R_d^n$ evaluation matrix of a basis of $V$ and $M$ is an
$R_d^n \times R_d^n$ matrix determined solely by the kernel
$K(\mathbf{x},\mathbf{y}) = \nrm{\mathbf{x}-\mathbf{y}}^{2n}$ and the choice of
basis, independent of the points.  We then prove that $M$ is nonsingular, from
which the rank identity follows immediately.

\section{Preliminaries}\label{sec:prelim}

We follow the notation of \cite{HorvatJaklicKavklerRandic2014}.  Throughout,
$d$ and $n$ denote positive integers.
Let $\Pk_{\R^d}$ denote the space of all polynomials on $\R^d$ with real
coefficients, $\Pk^n_{\R^d}$ the subspace of polynomials of degree at most~$n$,
and $\Hk^n_{\R^d}$ the subspace of \emph{homogeneous} polynomials of degree~$n$
(throughout, $\Hk^n$ means homogeneous, not harmonic).  Then
$\dim \Hk^n_{\R^d} = \binom{n+d-1}{d-1}$ and
$\dim \Pk^n_{\R^d} = \binom{n+d}{d}$.

Throughout, $\N = \{0, 1, 2, \dots\}$ includes~$0$.  A \emph{multi-index} is a
vector $\alpha = (\alpha_1, \dots, \alpha_d)
\in \N^d$; we write $|\alpha| = \alpha_1 + \cdots + \alpha_d$ and
$\mathbf{x}^\alpha = x_1^{\alpha_1}\cdots x_d^{\alpha_d}$.  For a multi-index
lower argument we use the multinomial coefficient
$\binom{m}{\gamma} = m!/(\gamma_1!\cdots\gamma_d!)$, read as $0$ unless every
$\gamma_i \in \N$ and $\sum_i \gamma_i = m$.

For a matrix $X = [\mathbf{x}_1 \cdots \mathbf{x}_m] \in \R^{d \times m}$
and a polynomial $p \in \Pk_{\R^d}$, the \emph{evaluation vector} is
$\ev_X(p) = [p(\mathbf{x}_1), \dots, p(\mathbf{x}_m)]^T \in \R^m$;
for a subspace $P \subseteq \Pk_{\R^d}$ we write
$\ev_X(P) = \{\ev_X(p) : p \in P\}$.

\section{Main result}\label{sec:main}

We partition the polynomial space $V$ into three degree-compatible blocks
(Section~\ref{sec:basis}), expand $K$ via a trinomial identity to obtain
$D^{(n)}=\Phi_V M\Phi_V^T$ (Section~\ref{sec:factorisation}), and exploit the
resulting block-diagonal structure of $M$ to prove it is nonsingular
(Sections~\ref{sec:block-structure}--\ref{sec:nonsingular}).

\subsection{The polynomial space and its basis}\label{sec:basis}

Each basis function of~$V$ has the form
$\varphi_{r,\alpha}(\mathbf{x}) = \nrm{\mathbf{x}}^{2r}\, \mathbf{x}^\alpha$, of
homogeneous degree $2r + |\alpha|$.  We partition~$V$ by degree into three
blocks, anticipating that the jointly homogeneous kernel pairs only degrees
summing to~$2n$ (Section~\ref{sec:block-structure}).

\begin{definition}[Three-block partition of $V$]\label{def:blocks}
\
\begin{itemize}
\item[\textbf{Block~A}] ($\Pk^{<n}_{\R^d}$, degrees $0, \dots, n{-}1$):
$\varphi_{0,\alpha}(\mathbf{x}) = \mathbf{x}^\alpha$
for $|\alpha| \le n-1$, with $|A| = \binom{n+d-1}{d}$.

\item[\textbf{Block~B}] (homogeneous degree $n$):
$\varphi_{0,\alpha}(\mathbf{x}) = \mathbf{x}^\alpha$
for $|\alpha| = n$, with $|B| = \binom{n+d-1}{d-1}$.

\item[\textbf{Block~C}] (mixed terms, degrees $n{+}1, \dots, 2n$):
$\varphi_{r,\alpha}(\mathbf{x}) = \nrm{\mathbf{x}}^{2r}\, \mathbf{x}^\alpha$
for $r = 1, \dots, n$ and $|\alpha| = n - r$, with
$|C| = \sum_{r=1}^{n} \binom{n-r+d-1}{d-1} = \binom{n+d-1}{d}$.
\end{itemize}
\end{definition}

Note that $|A| = |C|$.  Blocks~A and~B together are the monomial basis of
$\Pk^n_{\R^d}$, while Block~C spans the second summand of $V$ (substituting
$l=n-r$).  These functions are linearly independent, hence a basis of~$V$.
Indeed, each $\varphi_{r,\alpha}$ is homogeneous, of degree $0,\dots,n-1$ in
Block~A, $n$ in Block~B, and $n+r$ for the part of Block~C with parameter
$r=1,\dots,n$; these degrees are pairwise distinct, so a linear dependence would
have to occur within a single degree.  Within Blocks~A and~B the
$\mathbf{x}^\alpha$ are independent, and the degree-$(n+r)$ part of Block~C is
the image of the monomial basis $\{\mathbf{x}^\alpha:|\alpha|=n-r\}$ of
$\Hk^{n-r}_{\R^d}$ under multiplication by the nonzero polynomial
$\nrm{\mathbf{x}}^{2r}$, which is injective since $\R[x_1,\dots,x_d]$ is an
integral domain; hence that part is independent and of full dimension
$\binom{n-r+d-1}{d-1}$.  The total dimension is
\[
  N := |A| + |B| + |C| = \binom{n+d}{d} + \binom{n+d-1}{d} = R_d^n.
\]
We order the basis as $\varphi_1, \dots, \varphi_N$, grouped by block (A, then
B, then C): within Block~A by ascending homogeneous degree, within Block~C by
\emph{descending} homogeneous degree, and lexicographically among multi-indices
of equal degree (and lexicographically within Block~B).  With this ordering the
degree-$k$ part of Block~A ($k=0,\dots,n-1$) occupies the same block position as
the degree-$(2n-k)$ part of Block~C, so the coupling block $M_{AC}$ of
Section~\ref{sec:block-structure} is literally block-diagonal.  We
define the \emph{evaluation matrix}
$\Phi_V = [\varphi_j(\mathbf{x}_i)]_{i,j} \in \R^{m \times N}$.

\subsection{The kernel factorisation}\label{sec:factorisation}

We express $K(\mathbf{x},\mathbf{y})=\nrm{\mathbf{x}-\mathbf{y}}^{2n}$ as a
bilinear form in the basis of~$V$, thereby obtaining $D^{(n)}=\Phi_V M\Phi_V^T$.
Writing $\nrm{\mathbf{x}-\mathbf{y}}^2 = \nrm{\mathbf{x}}^2 -
2\,\mathbf{x}^T\mathbf{y} + \nrm{\mathbf{y}}^2$, the multinomial theorem gives
the \emph{trinomial expansion}
\begin{equation}\label{eq:trinomial}
  K(\mathbf{x},\mathbf{y})
  = \sum_{a+b+c=n}\frac{n!}{a!\,b!\,c!}\;
    \nrm{\mathbf{x}}^{2a}\,(-2\,\mathbf{x}^T\mathbf{y})^b\,\nrm{\mathbf{y}}^{2c}.
\end{equation}

\begin{lemma}\label{lem:factorisation}
There exists a matrix $M \in \R^{N \times N}$, depending only on $d$ and $n$,
such that $D^{(n)} = \Phi_V\, M\, \Phi_V^T$.
\end{lemma}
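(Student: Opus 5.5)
The plan is to expand each term of the trinomial expansion \eqref{eq:trinomial} as a finite sum $\sum_{j,k} M_{jk}\,\varphi_j(\mathbf{x})\varphi_k(\mathbf{y})$ with coefficients $M_{jk}$ that depend only on $d$ and $n$. Evaluating at $\mathbf{x}=\mathbf{x}_i$, $\mathbf{y}=\mathbf{x}_l$ then gives $d_{il}^n = K(\mathbf{x}_i,\mathbf{x}_l) = \sum_{j,k}\Phi_{V,ij}M_{jk}\Phi_{V,lk}$, which is exactly $D^{(n)}=\Phi_V M\Phi_V^T$. So the whole proof amounts to checking that every monomial-type piece of $K$ factors as a product of a basis function in $\mathbf{x}$ and a basis function in $\mathbf{y}$.

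For the middle factor, apply the multinomial theorem in $d$ variables:
\[
(\mathbf{x}^T\mathbf{y})^b=\sum_{|\gamma|=b}\binom{b}{\gamma}\mathbf{x}^\gamma\mathbf{y}^\gamma .
\]
Then a typical summand of \eqref{eq:trinomial} becomes
\[
\frac{n!}{a!\,b!\,c!}(-2)^b\binom{b}{\gamma}\,\bigl(\nrm{\mathbf{x}}^{2a}\mathbf{x}^\gamma\bigr)\bigl(\nrm{\mathbf{y}}^{2c}\mathbf{y}^\gamma\bigr),\qquad a+b+c=n,\ |\gamma|=b .
\]
It remains to check that $\nrm{\mathbf{x}}^{2a}\mathbf{x}^\gamma=\varphi_{a,\gamma}$, with $a+|\gamma|\le n$, lies in the basis of Definition~\ref{def:blocks}.

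\textbf{Case $a=0$.} Here $|\gamma|=b\le n$, so $\varphi_{0,\gamma}=\mathbf{x}^\gamma$ belongs to Block~A (if $|\gamma|\le n-1$) or Block~B (if $|\gamma|=n$).

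\textbf{Case $a\ge1$.} Block~C contains only $\varphi_{r,\alpha}$ with $r+|\alpha|=n$ exactly, which means $c=0$. When $c>0$ we have $a+|\gamma|<n$, and $\nrm{\mathbf{x}}^{2a}\mathbf{x}^\gamma$ is not itself a basis element. This is the main obstacle. To handle it, expand $\nrm{\mathbf{x}}^{2a}=(x_1^2+\cdots+x_d^2)^a$ by the multinomial theorem. The result is a polynomial of degree $2a+b$. If $2a+b\le n$, this polynomial lies in $\Pk^n_{\R^d}$ and is a fixed integer combination of monomials in Blocks~A and~B. If $2a+b>n$, write $2a+b=n+r'$ and try to reexpress the polynomial as an element of $\nrm{\mathbf{x}}^{2r'}\Hk^{n-r'}$. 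This works because $a\ge r'$ follows from $a+b\le n$, so
\[
\nrm{\mathbf{x}}^{2a}\mathbf{x}^\gamma=\nrm{\mathbf{x}}^{2r'}\bigl(\nrm{\mathbf{x}}^{2(a-r')}\mathbf{x}^\gamma\bigr),
\]
and the bracket is homogeneous of degree $2(a-r')+b=n-r'$. Expanding the bracket into monomials $\mathbf{x}^\alpha$ with $|\alpha|=n-r'$ gives a combination of the Block~C functions $\varphi_{r',\alpha}$ with universal rational coefficients. The same reasoning applies in $\mathbf{y}$ with exponent $c$.

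Collecting all contributions, $K(\mathbf{x},\mathbf{y})=\sum_{j,k}M_{jk}\varphi_j(\mathbf{x})\varphi_k(\mathbf{y})$, where each $M_{jk}$ is a finite sum of products of multinomial coefficients and powers of $-2$. These coefficients do not depend on the points, which proves the lemma. One can also symmetrize $M$ by replacing it with $(M+M^T)/2$, since $K$ is symmetric; this is not needed for the statement, but it is convenient for the later block analysis. Finally, a degree count shows that $M_{jk}\neq0$ only when $\deg\varphi_j+\deg\varphi_k=2n$, because $K$ is jointly homogeneous of degree $2n$. This anticipates the block structure used in Section~\ref{sec:block-structure}.
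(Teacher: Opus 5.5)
Your proposal is correct and follows the paper's proof essentially verbatim: you use the same multinomial expansion of $(-2\,\mathbf{x}^T\mathbf{y})^b$, the same split on $2a+b\le n$ versus $2a+b>n$, and the same factorisation. Your $r'=2a+b-n$ is exactly the paper's $j=a-c$, and your bracket $\nrm{\mathbf{x}}^{2(a-r')}\mathbf{x}^\gamma$ is the paper's inner factor $\nrm{\mathbf{x}}^{2c}\mathbf{x}^\rho$; the preliminary split into $a=0$ and $a\ge1$ is harmless but redundant.
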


\begin{proof}
Expanding $(-2\,\mathbf{x}^T\mathbf{y})^b
= (-2)^b\sum_{|\rho|=b}\binom{b}{\rho}\,\mathbf{x}^\rho\,\mathbf{y}^\rho$
in~\eqref{eq:trinomial}, each summand contributes the $\mathbf{x}$-factor
$\nrm{\mathbf{x}}^{2a}\mathbf{x}^\rho$ and the $\mathbf{y}$-factor
$\nrm{\mathbf{y}}^{2c}\mathbf{y}^\rho$.  We show every $\mathbf{x}$-factor lies
in~$V$; by symmetry so does every $\mathbf{y}$-factor, whence $K \in V \otimes V$
and the factorisation follows.  Fix $(a,b,c)$ with $a+b+c=n$ and $|\rho|=b$; the
$\mathbf{x}$-factor has homogeneous degree $2a+b$.

\begin{description}
\item[Case~1 ($2a+b\le n$):]
$\nrm{\mathbf{x}}^{2a}\mathbf{x}^\rho$ is a polynomial of degree $\le n$, hence
lies in $\Pk^n_{\R^d}$, spanned by Blocks~A and~B.

\item[Case~2 ($2a+b>n$):]
since $b=n-a-c$ we have $2a+b=n+(a-c)$; set $j=a-c\ge 1$.  Then
\[
  \nrm{\mathbf{x}}^{2a}\mathbf{x}^\rho
  = \nrm{\mathbf{x}}^{2j}\,
    \underbrace{\nrm{\mathbf{x}}^{2c}\mathbf{x}^\rho}_
    {\text{homogeneous of degree }n-j},
\]
so each monomial $\mathbf{x}^\sigma$ ($|\sigma|=n-j$) of the inner factor,
multiplied by $\nrm{\mathbf{x}}^{2j}$, gives the Block~C basis element
$\varphi_{j,\sigma}$.
\end{description}

\noindent
Collecting coefficients over all $(a,b,c,\rho)$ gives
$K(\mathbf{x},\mathbf{y})=\sum_{i,j}M_{ij}\,\varphi_i(\mathbf{x})\,
\varphi_j(\mathbf{y})$; evaluating at point pairs yields
$D^{(n)}=\Phi_V\,M\,\Phi_V^T$.
\end{proof}

\subsection{Block structure of \texorpdfstring{$M$}{M}}\label{sec:block-structure}

The kernel is jointly homogeneous of degree~$2n$
($K(t\mathbf{x},t\mathbf{y})=t^{2n}K(\mathbf{x},\mathbf{y})$), so
\begin{equation}\label{eq:M-homogeneity}
  M_{ij} \ne 0 \implies \deg(\varphi_i) + \deg(\varphi_j) = 2n.
\end{equation}
Within the A/B/C partition the only degree pairings summing to~$2n$ are
A--C (degree $k<n$ with $2n-k$) and B--B (degree $n$ with $n$); all other
blocks vanish, giving
\begin{equation}\label{eq:M-block}
  M =
  \begin{pmatrix}
    0  & 0      & M_{AC} \\[3pt]
    0  & M_{BB} & 0 \\[3pt]
    M_{CA} & 0  & 0
  \end{pmatrix},
  \qquad M_{CA}=M_{AC}^T,\quad |A|=|C|.
\end{equation}
Since~\eqref{eq:M-homogeneity} pairs each degree-$k$ Block~A index with
degree-$(2n-k)$ Block~C indices, $M_{AC}=\bigoplus_{k=0}^{n-1} M_{AC}^{(k)}$ is
block-diagonal, with $M_{AC}^{(k)}$ square of order $\binom{k+d-1}{d-1}$.
Reordering the row blocks $(A,B,C)\to(C,B,A)$ leaves $B$ fixed and interchanges
the two equal-size end blocks, a permutation of parity $(-1)^{|A|}$
(independent of~$|B|$), so
\begin{equation}\label{eq:det-M}
  \det M = (-1)^{|A|}\,\det(M_{BB})\,(\det M_{AC})^2,
\end{equation}
and $M$ is nonsingular if and only if each $M_{AC}^{(k)}$ ($k<n$) and $M_{BB}$ is.

\subsection{Nonsingularity of \texorpdfstring{$M$}{M}}\label{sec:nonsingular}

Set
\[
  M^{(k)} := M_{AC}^{(k)}\ (k=0,\dots,n-1), \qquad M^{(n)} := M_{BB},
\]
a square matrix of order $\binom{k+d-1}{d-1}$ indexed on both sides by
$\{\alpha\in\N^d : |\alpha|=k\}$.  Two lemmas settle nonsingularity at once.

\begin{lemma}[Weighted Gram matrices]\label{lem:gram}
Let $I$, $J$ be finite sets, let $w_\rho > 0$ for $\rho \in I$, and let
$u_\rho(\alpha) \in \R$ for $\rho \in I$, $\alpha \in J$.  Then
$P_{\alpha,\beta} = \sum_{\rho \in I} w_\rho\, u_\rho(\alpha)\, u_\rho(\beta)$
defines a symmetric positive semidefinite $|J| \times |J|$ matrix.
\end{lemma}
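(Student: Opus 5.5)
The plan is to verify the two properties directly from the defining formula $P_{\alpha,\beta} = \sum_{\rho\in I} w_\rho\, u_\rho(\alpha)\, u_\rho(\beta)$. Symmetry needs no work: each summand $w_\rho\,u_\rho(\alpha)\,u_\rho(\beta)$ is unchanged when $\alpha$ and $\beta$ are swapped, because multiplication in $\R$ is commutative. Summing over $\rho$ therefore gives $P_{\alpha,\beta}=P_{\beta,\alpha}$.

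For positive semidefiniteness we compute the quadratic form. Fix an arbitrary vector $z\in\R^{J}$. Since $I$ and $J$ are finite, the order of summation can be exchanged, and
\[
  z^T P z=\sum_{\alpha,\beta\in J} z_\alpha z_\beta \sum_{\rho\in I} w_\rho\, u_\rho(\alpha)\,u_\rho(\beta)
  =\sum_{\rho\in I} w_\rho\Bigl(\sum_{\alpha\in J} u_\rho(\alpha)\,z_\alpha\Bigr)^{2}.
\]
Every weight $w_\rho$ is positive and every squared term is nonnegative, so $z^TPz\ge 0$.

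The same conclusion can be phrased as a matrix factorisation. Let $U$ be the $|I|\times|J|$ matrix with entries $U_{\rho\alpha}=u_\rho(\alpha)$, and let $W=\diag(w_\rho)$. Then $P=U^TWU$, which equals $(W^{1/2}U)^T(W^{1/2}U)$ and is therefore a Gram matrix.

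There is no real obstacle here. The only point worth checking is the interchange of the two finite sums, which is elementary. This lemma gives only semidefiniteness. Proving that the blocks $M^{(k)}$ are actually definite will require a separate spanning or rank argument for the vectors $u_\rho$, and that will be the harder step in the paper's later reasoning.
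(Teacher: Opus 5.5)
Your proof is correct and is essentially the paper's argument: the paper writes $P=U^TWU$ with $U_{\rho,\alpha}=u_\rho(\alpha)$ and $W=\diag(w_\rho)$, then reads off symmetry and $v^TPv=\|W^{1/2}Uv\|^2\ge 0$, which is exactly your factorisation and your expanded quadratic form. On your closing remark, the later definiteness step is in fact easy: the $b=k$ summand is already the positive diagonal matrix $2^k\binom{n}{k}\diag\bigl(\binom{k}{\alpha}\bigr)$, so no harder spanning argument is needed there.
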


\begin{proof}
With $U_{\rho,\alpha} = u_\rho(\alpha)$ and $W = \diag(w_\rho)$ we have
$P = U^T W U$, which is symmetric ($W$ diagonal) and satisfies
$v^T P v = \|W^{1/2}Uv\|^2 \ge 0$.
\end{proof}

\begin{lemma}[Uniform positivity of degree blocks]\label{lem:block-positive}
For each $k = 0, 1, \dots, n$, the matrix $M^{(k)}$ is symmetric and
$(-1)^k M^{(k)} \succ 0$.  In particular, $M$ is nonsingular.
\end{lemma}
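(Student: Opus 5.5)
The plan is to compute each block $M^{(k)}$ explicitly from the trinomial expansion~\eqref{eq:trinomial}. We then write $(-1)^kM^{(k)}$ as a positive-coefficient sum of rank-one terms $u_\rho u_\rho^T$, so that Lemma~\ref{lem:gram} applies, and isolate one strictly positive diagonal summand to upgrade semidefiniteness to definiteness. Since $\varphi_1,\dots,\varphi_N$ is a basis of $V$ and the functions $\varphi_i(\mathbf{x})\varphi_j(\mathbf{y})$ are independent in $V\otimes V$, the coefficients $M_{ij}$ are uniquely determined. So we may read them off from any expansion of $K$ in this basis, namely the one constructed in the proof of Lemma~\ref{lem:factorisation}.

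\emph{Block $B$ ($k=n$).} A summand $(a,b,c,\rho)$ contributes to $M_{BB}$ only if both factors have degree $n$, i.e.\ $2a+b=n=2c+b$, hence $a=c$ and $b=n-2a$. Both factors are then the same polynomial up to renaming the variable: $\nrm{\mathbf{x}}^{2a}\mathbf{x}^\rho$ and $\nrm{\mathbf{y}}^{2a}\mathbf{y}^\rho$. Let $u_{a,\rho}(\alpha)$ denote the coefficient of $\mathbf{x}^\alpha$ in $\nrm{\mathbf{x}}^{2a}\mathbf{x}^\rho$ ($|\alpha|=n$). This gives
\[
M^{(n)}_{\alpha\beta}=\sum_{a}\sum_{|\rho|=n-2a}\frac{n!}{a!^2\,b!}(-2)^b\binom{b}{\rho}\,u_{a,\rho}(\alpha)\,u_{a,\rho}(\beta).
\]
Since $b\equiv n \pmod 2$, we have $(-1)^n(-2)^b=2^b>0$. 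Therefore $(-1)^nM^{(n)}$ is of the form in Lemma~\ref{lem:gram}, which makes it symmetric and positive semidefinite.

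\emph{Block $A$--$C$ ($k<n$).} Take the $A$-index $\mathbf{x}^\alpha$ with $|\alpha|=k$ and the $C$-index $\varphi_{n-k,\sigma}$ with $|\sigma|=k$. A summand contributes only if $2a+b=k$ (Case~1 on the $\mathbf{x}$-side) and $2c+b=2n-k$. Then $b=k-2a$ and $c=n-k+a$, and Case~2 on the $\mathbf{y}$-side applies with $j=c-a=n-k$ and inner factor $\nrm{\mathbf{y}}^{2a}\mathbf{y}^\rho$. The key observation is that this inner factor is again the same homogeneous degree-$k$ polynomial as the $\mathbf{x}$-factor. Hence
\[
M^{(k)}_{\alpha\sigma}=\sum_{a}\sum_{|\rho|=k-2a}\frac{n!}{a!\,b!\,c!}(-2)^b\binom{b}{\rho}\,u_{a,\rho}(\alpha)\,u_{a,\rho}(\sigma),
\]
where $u_{a,\rho}$ is now defined with respect to degree-$k$ monomials. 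The sign $(-1)^b=(-1)^k$ is again uniform, so Lemma~\ref{lem:gram} gives that $(-1)^kM^{(k)}$ is symmetric and positive semidefinite. Symmetry of the blocks is also consistent with $M_{CA}=M_{AC}^T$.

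\emph{Strict positivity.} In both cases the $a=0$ summands have $u_{0,\rho}(\alpha)=\delta_{\rho\alpha}$. They therefore contribute the diagonal matrix with entries $\frac{n!}{k!\,(n-k)!}2^k\binom{k}{\alpha}>0$, which is positive definite. All remaining summands are positive semidefinite, so $(-1)^kM^{(k)}\succ0$ for every $k=0,\dots,n$. In particular each $M_{AC}^{(k)}$ and $M_{BB}$ is nonsingular, and~\eqref{eq:det-M} yields $\det M\neq0$.

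The main obstacle is the bookkeeping in the $A$--$C$ case. One must check that the $C$-coordinates produced in Case~2 are exactly the coefficients of the inner factor $\nrm{\mathbf{y}}^{2a}\mathbf{y}^\rho$. This is what makes the two sides share the same vector $u_{a,\rho}$, so that the block is a weighted Gram matrix rather than a general bilinear form.
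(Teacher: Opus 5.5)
Your proposal is correct and follows essentially the same route as the paper. You write $(-1)^kM^{(k)}$ as a positively weighted sum of Gram matrices built from the coefficient vectors of $\nrm{\mathbf{x}}^{2a}\mathbf{x}^\rho$; these are the paper's $P_b$ with $b=k-2a$. You then upgrade semidefiniteness to definiteness using the $a=0$ (i.e.\ $b=k$) diagonal term $2^k\binom{n}{k}\diag\bigl(\tbinom{k}{\alpha}\bigr)$, exactly as the paper does.

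The only difference is cosmetic: you read the coefficients off summand by summand from the proof of Lemma~\ref{lem:factorisation}, after justifying that $M$ is unique. The paper instead restricts $K$ to bidegree $(k,2n-k)$ and factors out $\nrm{\mathbf{y}}^{2(n-k)}$.
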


\begin{proof}
The $(a,b,c)$-summand of~\eqref{eq:trinomial} has $\mathbf{x}$-degree $2a+b$ and
$\mathbf{y}$-degree $b+2c$, so its degree-$(k,\,2n-k)$ part needs $2a+b=k$ and
$b+2c=2n-k$, giving $a=(k-b)/2$, $c=(2n-k-b)/2$; integrality and nonnegativity
force $b\equiv k\pmod 2$ and $0\le b\le k$.  Thus
\begin{equation}\label{eq:K-k-restriction}
  K\big|_{(k,\,2n-k)}
  = \nrm{\mathbf{y}}^{2(n-k)}
    \sum_{\substack{0\le b\le k\\b\equiv k\,(2)}}
    d_b\;(\mathbf{x}^T\mathbf{x})^{(k-b)/2}(\mathbf{x}^T\mathbf{y})^b(\mathbf{y}^T\mathbf{y})^{(k-b)/2},
  \qquad
  d_b = \frac{n!\,(-2)^b}{\bigl(\tfrac{k-b}{2}\bigr)!\;b!\;
        \bigl(\tfrac{2n-k-b}{2}\bigr)!},
\end{equation}
using $2c=2n-k-b=2(n-k)+(k-b)$ with $k-b$ even.  The prefactor
$\nrm{\mathbf{y}}^{2(n-k)}$ is the Block~C weight:
the degree-$(2n-k)$ basis elements of Block~C are exactly
$\varphi_{n-k,\beta}=\nrm{\mathbf{y}}^{2(n-k)}\mathbf{y}^\beta$ with $|\beta|=k$
(the weight being $1$, i.e.\ the Block~B monomials $\mathbf{y}^\beta$, when
$k=n$), so after factoring out $\nrm{\mathbf{y}}^{2(n-k)}$ the entry
$M^{(k)}_{\alpha\beta}$ is the coefficient of $\mathbf{x}^\alpha\mathbf{y}^\beta$
in $\sum_b d_b\,(\mathbf{x}^T\mathbf{x})^{(k-b)/2}(\mathbf{x}^T\mathbf{y})^b(\mathbf{y}^T\mathbf{y})^{(k-b)/2}$.
Each summand
$(\mathbf{x}^T\mathbf{x})^{(k-b)/2}(\mathbf{x}^T\mathbf{y})^b(\mathbf{y}^T\mathbf{y})^{(k-b)/2}$ is
invariant under $\mathbf{x}\leftrightarrow\mathbf{y}$, so this coefficient equals
that of $\mathbf{x}^\beta\mathbf{y}^\alpha$; hence $M^{(k)}$ is symmetric.

Expanding in monomials, the coefficient of $\mathbf{x}^\alpha\mathbf{y}^\beta$
(with $|\alpha|=|\beta|=k$) in the $b$-th summand is
\begin{equation}\label{eq:Pb}
  [P_b]_{\alpha,\beta}
  = \sum_{|\rho|=b}\binom{b}{\rho}\,
    \binom{(k{-}b)/2}{(\alpha{-}\rho)/2}\,
    \binom{(k{-}b)/2}{(\beta{-}\rho)/2},
\end{equation}
which is PSD by Lemma~\ref{lem:gram} (with $w_\rho=\binom{b}{\rho}$ and
$u_\rho(\alpha)=\binom{(k-b)/2}{(\alpha-\rho)/2}$, using the multi-index
convention of Section~\ref{sec:prelim}).  Since $b\equiv k\pmod 2$ we have
$(-1)^k\,(-2)^b = 2^b$, so $(-1)^k d_b>0$ for every valid~$b$, and
$(-1)^k M^{(k)}=\sum_{\substack{0\le b\le k\\b\equiv k\,(2)}} (-1)^k d_b\,P_b$ is
a positively weighted sum of PSD matrices.  The $b=k$ term dominates: there $(k-b)/2=0$, so
$u_\rho(\alpha)=\delta_{\alpha,\rho}$ and
$[P_k]_{\alpha,\beta}=\binom{k}{\alpha}\delta_{\alpha\beta}$, with coefficient
$(-1)^k d_k=2^k\binom{n}{k}$.  Therefore
\[
  (-1)^k M^{(k)} \;\succcurlyeq\; 2^k\binom{n}{k}\,\diag\!\bigl(\tbinom{k}{\alpha}\bigr)
  \;\succ\; 0,
\]
and nonsingularity of $M$ follows from~\eqref{eq:det-M}.
\end{proof}

\begin{remark}
For $d=1$ the basis is $\{1, t, \dots, t^{2n}\}$ and $M$ is the anti-diagonal
matrix with $M_{a,\,2n-a}=(-1)^a\binom{2n}{a}$, so
$\det M=\prod_{a=0}^{2n}\binom{2n}{a}>0$, the explicit $d=1$ instance of
$(-1)^k M^{(k)}\succ0$.
\end{remark}

\subsection*{The rank identity}

\begin{theorem}\label{thm:main}
Let $\mathbf{x}_1, \dots, \mathbf{x}_m \in \R^d$ be distinct points with
Euclidean distance matrix $D = [\nrm{\mathbf{x}_i-\mathbf{x}_j}^2]$ and Hadamard
power $D^{(n)} = [\nrm{\mathbf{x}_i-\mathbf{x}_j}^{2n}]$, and let
$R_d^n := \binom{d+n}{d} + \binom{d+n-1}{d}$.  Then $\rank D^{(n)} \le R_d^n$,
with strict inequality if and only if some nonzero element of the space~$V$
of~\eqref{eq:V} vanishes at every~$\mathbf{x}_i$.  Equivalently,
the inequality is strict if and only if there exist
$p \in \Pk^n_{\R^d}$ and $q \in \Pk^{n-1}_{\R^d}$, not both zero, with
\[
  p(\mathbf{x}_i) + \nrm{\mathbf{x}_i}^{2n}\,
  q\!\left(\mathbf{x}_i/\nrm{\mathbf{x}_i}^2\right) = 0
  \qquad (i = 1, \dots, m),
\]
where $\nrm{\mathbf{x}}^{2n} q(\mathbf{x}/\nrm{\mathbf{x}}^2)$ denotes the
polynomial in $\sum_{l=0}^{n-1}\nrm{\mathbf{x}}^{2(n-l)}\Hk^l_{\R^d}$ to which it
extends: writing $q=\sum_{l=0}^{n-1}q_l$ with $q_l\in\Hk^l_{\R^d}$, the $l$-th
term is $\nrm{\mathbf{x}}^{2n} q_l(\mathbf{x}/\nrm{\mathbf{x}}^2)
=\nrm{\mathbf{x}}^{2(n-l)}q_l(\mathbf{x})$, carrying the positive power $n-l\ge1$
of $\nrm{\mathbf{x}}^2$ (so the extension is well defined at $\mathbf{x}=0$).
\end{theorem}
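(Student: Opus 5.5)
The plan is to read the rank directly off the factorisation of Lemma~\ref{lem:factorisation}, $D^{(n)}=\Phi_V M\Phi_V^T$, using that $M$ is nonsingular by Lemma~\ref{lem:block-positive} together with~\eqref{eq:det-M}. The first step is the claim $\rank D^{(n)}=\rank\Phi_V$. Since $\Phi_V$ is $m\times N$, this gives $\rank D^{(n)}\le N=R_d^n$ at once, and equality holds if and only if $\Phi_V$ has full column rank. Full column rank fails exactly when $\Phi_V c=0$ for some $c\ne0$, that is, when the nonzero polynomial $\sum_j c_j\varphi_j\in V$ vanishes at every $\mathbf{x}_i$; nonzeroness holds because the $\varphi_j$ form a basis of $V$, as shown in Section~\ref{sec:basis}. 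This gives the first characterisation of strict inequality.

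The key step, and the only nontrivial one, is $\rank(\Phi_V M\Phi_V^T)=\rank\Phi_V$ when $M$ is invertible but indefinite. Write $\Phi=\Phi_V$. The inequality $\le$ is immediate. For the reverse, first note that $\im(\Phi M\Phi^T)\subseteq\im\Phi$. Next, $\Phi M\Phi^T v=0$ implies $M\Phi^Tv\in\ker\Phi$. One cannot use the usual PSD trick here, because $(-1)^kM^{(k)}$ alternates in sign and $M$ itself is indefinite. Instead I would argue by dimension: $\rank(\Phi M\Phi^T)\ge \rank(\Phi M)+\rank(\Phi^T)-N$ by Sylvester's inequality, which only yields $2r-N$ and so is too weak. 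I would therefore use a cleaner factorisation. Choose a column basis by writing $\Phi=F G$, with $F\in\R^{m\times r}$ of full column rank and $G\in\R^{r\times N}$ of full row rank. Then $D^{(n)}=F(GMG^T)F^T$, and the rank equals $\rank(GMG^T)$, which for indefinite $M$ can drop below $r$. So in general the identity must be invoked only in the full-column-rank case $r=N$. There $\Phi$ has a left inverse $L$, hence $LD^{(n)}L^T=M$ is invertible, so $\rank D^{(n)}\ge N$.

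This reorganises the proof. If no nonzero element of $V$ vanishes on the points, $\Phi$ has full column rank, the left-inverse argument gives $\rank D^{(n)}=N$, and combined with $\rank\le\rank\Phi\le N$ this yields equality. Conversely, if some nonzero $v\in V$ vanishes on all $\mathbf{x}_i$, then $\rank D^{(n)}\le\rank\Phi<N$, giving strict inequality. Distinctness of the points is not actually needed for this argument, though it is harmless.

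Finally I would translate membership in $V$ into the $(p,q)$ form. By~\eqref{eq:V} and Definition~\ref{def:blocks}, every $v\in V$ decomposes uniquely as $p+s$, with $p\in\Pk^n_{\R^d}$ (Blocks A, B) and $s=\sum_{l=0}^{n-1}\nrm{\mathbf{x}}^{2(n-l)}q_l$ with $q_l\in\Hk^l_{\R^d}$ (Block C, $l=n-r$). Setting $q=\sum q_l\in\Pk^{n-1}_{\R^d}$ and using $q_l(\mathbf{x}/\nrm{\mathbf{x}}^2)=\nrm{\mathbf{x}}^{-2l}q_l(\mathbf{x})$ identifies $s$ with the stated extension of $\nrm{\mathbf{x}}^{2n}q(\mathbf{x}/\nrm{\mathbf{x}}^2)$. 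The correspondence $(p,q)\leftrightarrow v$ is a linear bijection by linear independence of the basis, so $v\ne0$ if and only if $(p,q)\ne(0,0)$. I expect no real obstacle beyond recognising that the rank equality must be argued via the left inverse in the full-rank case, rather than through a general $\rank(\Phi M\Phi^T)=\rank\Phi$, which is false for indefinite $M$.
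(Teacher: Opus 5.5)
Your argument is correct and essentially the paper's: both take the upper bound and the strictness from $\im D^{(n)}\subseteq\im\Phi_V$, and in the full-column-rank case both combine injectivity of $\Phi_V$ with nonsingularity of $M$. The paper does this by showing $\ker D^{(n)}=\ker\Phi_V^T$, while you use the equivalent left-inverse identity $LD^{(n)}L^T=M$. In a clean write-up, delete the opening claim $\rank D^{(n)}=\rank\Phi_V$, which you rightly retract: it already fails for a single point, where $D^{(n)}=0$ but $\rank\Phi_V=1$.
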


\begin{proof}
By Lemma~\ref{lem:factorisation}, $\im D^{(n)}\subseteq\im\Phi_V$, so
$\rank D^{(n)}\le\rank\Phi_V\le N=R_d^n$ (the last step since $\Phi_V$ has $N$
columns).  If an annihilating $f\in V$ exists, then $\ker(\Phi_V)\ne\{0\}$,
hence $\rank\Phi_V<N$ and the inequality is strict.  Conversely, suppose no
annihilating element exists: then $\ker(\Phi_V)=\{0\}$, so $\Phi_V$ is injective
with full column rank $N=R_d^n$ (in particular $m\ge N$).

By Lemma~\ref{lem:factorisation}, $D^{(n)} = \Phi_V M \Phi_V^T$, and by
Lemma~\ref{lem:block-positive}, $M$ is nonsingular.  We claim
$\ker(D^{(n)}) = \ker(\Phi_V^T)$: the inclusion $\supseteq$ is immediate, and
if $\Phi_V M \Phi_V^T v=0$ then injectivity of $\Phi_V$ gives $M\Phi_V^T v=0$,
whence $\Phi_V^T v=0$ since $M$ is nonsingular.  As $\Phi_V$ has full column
rank, $\dim\ker(\Phi_V^T)=m-N$, so
\[
  \rank D^{(n)} = m - \dim\ker(D^{(n)}) = m-(m-N) = R_d^n. \qedhere
\]
\end{proof}

\begin{remark}\label{rem:geometric}
For generic configurations the condition $\ker(\Phi_V)=\{0\}$ holds as soon as
$m \ge R_d^n$.  Indeed, for any basis $f_1,\dots,f_N$ of the $N$-dimensional
space $V$ there exist points $\mathbf{y}_1,\dots,\mathbf{y}_N$ with
$\det[f_j(\mathbf{y}_i)]_{i,j=1}^N\ne0$, chosen inductively: if the evaluation
rows obtained so far spanned only a proper subspace $W\subsetneq\R^N$ and every
further row $[f_1(\mathbf{y}),\dots,f_N(\mathbf{y})]$ lay in~$W$, then a nonzero
$c\in W^\perp$ would give $\sum_j c_j f_j(\mathbf{y})=0$ for all
$\mathbf{y}\in\R^d$; as $\R$ is infinite this forces $\sum_j c_j f_j$ to be the
zero polynomial, contradicting the independence of the~$f_j$.  Hence some point
raises the rank, and after $N$ steps the evaluation matrix is nonsingular.
Consequently at least one $N\times N$ minor of $\Phi_V$ is a not-identically-zero
polynomial in the coordinates of the points, and the rank-deficient
configurations lie in the common zero set of these finitely many minors---a
proper algebraic subset of $(\R^d)^m$, hence of Lebesgue measure zero (and of
probability zero under any product of absolutely continuous distributions
on~$\R^d$).
\end{remark}

\section{Conclusion}\label{sec:conclusion}

We have completed Theorem~2 of \cite{HorvatJaklicKavklerRandic2014}: when no
annihilating polynomial exists, $\rank D^{(n)} = R_d^n$ exactly.  The proof rests
on the universal kernel factorisation $D^{(n)} = \Phi_V M \Phi_V^T$ (with $M$
independent of the point configuration) together with the uniform block
positivity $(-1)^k M^{(k)}\succ 0$, which yields nonsingularity of~$M$ and
reduces the rank question to a finite-dimensional, configuration-independent
positivity problem---thereby bypassing the normality obstruction
(Section~\ref{sec:intro}) that blocks the even/odd decomposition
of~\cite{HorvatJaklicKavklerRandic2014} for general points.  A NumPy
implementation confirms the block structure~\eqref{eq:M-block},
$\rank M = R_d^n$, and the predicted drop to $S_d^n$ on spherical
configurations, for all $d,n \le 7$ where dense computation is feasible.

A natural open problem is a closed-form expression for $\det M$ for general $d$
and $n$, analogous to the product formula
$\det M=\prod_{a=0}^{2n}\binom{2n}{a}$ of the $d=1$ case; the numerical data suggest a
structured factorisation in terms of multinomial coefficients.

\end{document}